\documentclass[12pt]{amsart}

\usepackage[T1]{fontenc}
\usepackage{amssymb}
\usepackage{amsmath}
\usepackage{mathtools}
\usepackage{txfonts}
\usepackage{enumitem}
\usepackage{hyperref}

\makeatletter
\@namedef{subjclassname@2020}{%
  \textup{2020} Mathematics Subject Classification}
\makeatother

\theoremstyle{plain}

\newtheorem{theorem}{Theorem}[section]
\newtheorem{proposition}[theorem]{Proposition}

\newtheorem{lemma}[theorem]{Lemma}

\theoremstyle{definition}

\newcommand{\C}{\mathbb C}
\newcommand{\F}{\mathbb F}
\newcommand{\Z}{\mathbb Z}
\newcommand{\E}{\mathbb E}
\newcommand{\Id}{\mathbf 1}
\newcommand{\U}{\mathrm U}
\newcommand{\CC}{\mathcal C}
\newcommand{\PG}{\mathsf P}
\newcommand{\ket}[1]{\lvert #1\rangle}
\newcommand{\ip}[2]{\left\langle #1,#2\right\rangle}
\newcommand{\norm}[1]{\left\lVert #1\right\rVert}
\newcommand{\tr}{\operatorname{tr}}

\begin{document}

\baselineskip=17pt

\title[Quantum uniformity norms as pullbacks]{Quantum uniformity norms are pullbacks of matrix-valued uniformity norms}

\author[A. Jamneshan]{Asgar Jamneshan}
\address{Institute of Mathematics \\ University of Bonn \\ 53115 Bonn, Germany}
\email{ajamnesh@math.uni-bonn.de}

\date{\today}

\begin{abstract} We show that the quantum uniformity norms recently introduced by Bu, Gu, and Jaffe are the pullbacks, under the Weyl orbit embedding, of the matrix-valued uniformity norms of Gowers and Hatami. This identification yields the Gowers--Cauchy--Schwarz inequality and the triangle inequality for the quantum uniformity norms, answering a question of Bu, Gu, and Jaffe. In the extremal regime, it describes the Clifford levels of Gottesman and Chuang in terms of certain unitary-valued Leibman polynomial maps on finite vector spaces. 
\end{abstract}

\subjclass[2020]{Primary 81P45; Secondary 11B30.}

\keywords{Clifford hierarchy, quantum uniformity norm, matrix-valued Gowers norm,
Leibman polynomial}

\maketitle

The Clifford hierarchy was introduced by Gottesman and Chuang \cite{GC99} and plays an important role in fault-tolerant quantum computation. It is the nested sequence $\CC^{(1)}\subseteq \CC^{(2)}\subseteq\cdots$ defined recursively by taking $\CC^{(1)}$ to be the Pauli group and, for $k\geq2$, $\CC^{(k)}=\{U:U\CC^{(1)}U^*\subseteq \CC^{(k-1)}\}$. Thus $\CC^{(2)}$ is the Clifford group, while already in the qubit case the higher levels need not be groups, starting at the third level \cite{anderson}. 
Bu, Gu, and Jaffe \cite{BGJ25} introduced quantum higher-order Fourier analysis and the associated quantum uniformity measures, and showed that the Clifford hierarchy is characterized by their extremal values.  We recall all definitions in Section \ref{sec:prelim}. 

The purpose of this note is to record the simple observation that these quantum uniformity norms are precisely pullbacks under the Weyl orbit embedding of the matrix-valued uniformity norms introduced by Gowers and Hatami \cite{GH17} and further studied by Thom and the author \cite{JT26}. This observation transfers, among other properties, the Gowers--Cauchy--Schwarz inequality and the triangle inequality from the matrix-valued theory to the quantum setting. Bu, Gu, and Jaffe proved these inequalities for the second and third quantum uniformity norms in \cite[\S 9]{BGJ25}; in \cite[Remark~43]{BGJ25} they asked whether they hold in higher orders. 
Since the extremal regime of the matrix-valued uniformity norms is described by Leibman polynomial maps \cite{Leibman2002}, the \(k\)th level of the
Clifford hierarchy is identified with those Weyl orbit maps that are unitary-valued Leibman polynomial maps of degree at most \(k\).

Bu, Gu, and Jaffe also formulated an interesting inverse conjecture for the quantum uniformity norms; see \cite[Conjecture~9]{BGJ25}. This conjecture can be interpreted as a Clifford-hierarchy testing problem. It asks, roughly, whether a unitary with large quantum uniformity norm must have nontrivial correlation with a unitary in the corresponding level of the Clifford hierarchy. 
It is not difficult to prove a polynomial \(1\%\)-inverse theorem for the second quantum uniformity norm, see, e.g., \cite[\S 3]{BBCH26}.  
Hinsche et al.~\cite[Theorem 1.3]{HBvDEBH25} proved a polynomial \(1\%\)-inverse theorem for the third quantum uniformity norm in the $n$-qubit setting. Bao et al.~\cite{BBCH26} proved the \(99\%\)-inverse theorem for the fourth quantum uniformity norm in the general $n$-qudit setting. Using the correspondence established here with the matrix-valued uniformity norms, we hope to report progress on the \(99\%\)-inverse theorem for the \(k\)th quantum uniformity norm for arbitrary $k$ in future work.

\subsection*{Acknowledgments and AI tool disclosure}
The author thanks Berke \"Unal for helpful discussions.  
The author was supported by the German Research Foundation under Germany's
Excellence Strategy -- EXC-2047 -- 390685813 and its Heisenberg Programme --
547294463.

ChatGPT (GPT-5.5) was used to assist with proofreading. Apart from
this use of AI tools, the manuscript was human-generated.

\section{Preliminaries}\label{sec:prelim}

We follow the conventions of \cite{BBCH26}.  Throughout, $p$ is a fixed prime number and $n\geq 1$ a fixed integer, $A:=\F_p^{2n}$ is a $2n$-dimensional vector space over $\F_p$, and $N:=p^n$. We write elements of $A$ as
$a=(u,v)$, with $u,v\in\F_p^n$.  Let
\[
    \omega:=e^{2\pi i/p},
    \qquad
    D_p:=
    \begin{cases}
        2p, & p=2,\\
        p, & p\neq2,
    \end{cases}
\]
and
\[
    \tau_p:=(-1)^p e^{\pi i/p}.
\]
Then $\tau_p^2=\omega$, and $\tau_p$ has order $D_p$.  On
$\C^{\F_p^n}$, define
\[
    X^v\ket{x}:=\ket{x+v},
    \qquad
    Z^u\ket{x}:=\omega^{x\cdot u}\ket{x}.
\]
For $a=(u,v)\in A$, define the Weyl operator
\[
    W_a
    :=
    \tau_p^{-\sum_{j=1}^n |u_jv_j|}Z^uX^v,
\]
where $|\cdot|:\F_p\to\{0,\ldots,p-1\}\subseteq\Z$ denotes the standard
section.

The symplectic form on $A$ is defined by 
\[
    [a,b]=[(u,v),(u',v')]:=u\cdot v'-u'\cdot v .
\]
There is a map $\beta:A\times A\to\Z/ D_p \Z$ such that
\begin{equation}\label{eq:twist}
    W_aW_b=\tau_p^{\beta(a,b)}W_{a+b},
    \qquad
    W_aW_b=\omega^{[a,b]}W_bW_a .
\end{equation}
The Pauli group is
\[
    \PG=\PG(n,p)
    :=
    \{\tau_p^tW_a:t\in\Z/D_p\Z,\ a\in A\}.
\]
Equivalently, $\PG$ is the image of the finite Heisenberg group $\Z/ D_p \Z\times A$ with multiplication
\[
    (t,a)(s,b):=(t+s+\beta(a,b),a+b)
\]
under the faithful representation
\[
    (t,a)\longmapsto \tau_p^tW_a .
\]
We set 
\[
\CC^{(1)}=\PG,
\]
and define for $k\geq 2$, the $k$th level of the Clifford hierarchy by 
\[
\CC^{(k)} :=\{U\in \U(N): U\CC^{(1)}U^*\subseteq \CC^{(k-1)}\}. 
\]
We introduce the quantum uniformity norms of Bu, Gu, and Jaffe\footnote{The same quantities are called Pauli uniformity norms in \cite{BBCH26}.}. 

On the space $\mathrm M_N(\C)$ of $N\times N$ complex matrices, we use the normalized
Hilbert--Schmidt inner product $\ip{S}{T}:=\frac1N\tr(S^*T)$ and the normalized Hilbert--Schmidt norm $\norm{T}_2^2:=\ip{T}{T}$. 
We denote by $\U(N)\subset \mathrm M_N(\C)$ the corresponding unitary group.

For $h\in A$ and $T\in \mathrm M_N(\C)$, define the derivative operator 
\[
    \partial_hT:=W_hTW_h^*T^*. 
\]
Iterated derivatives are defined by
\[
    \partial_{h_k,\ldots,h_1}
    :=
    \partial_{h_k}\circ\cdots\circ\partial_{h_1}.
\]
For $k\geq 1$, the $k$th quantum uniformity norm of $T$ is defined by the formula 
\[
    \norm{T}_{Q^k}^{2^k}
    :=
    \E_{h_1,\ldots,h_k\in A}
    \ip{\Id}{\partial_{h_k,\ldots,h_1}T}.
\]
Bu, Gu, and Jaffe established the following classification in \cite[Theorem 5]{BGJ25}.   
\begin{theorem}\label{thm:bgj}
    Let $k\geq 1$. For a unitary $U\in \U(N)$ it holds that 
\[
U\in \CC^{(k)} \qquad \text{if and only if} \qquad \norm{U}_{Q^{k+1}}=1.
\]
\end{theorem}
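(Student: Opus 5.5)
We prove Theorem \ref{thm:bgj} by induction on $k$, using a ``contraction'' identity relating $\norm{\cdot}_{Q^{k+1}}$ of $U$ to $\norm{\cdot}_{Q^k}$ of the conjugated Pauli operators $UW_hU^*$.

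\textbf{Step 1: upper bound and equality criterion.} Since every derivative of a unitary is unitary, $\abs{\ip{\Id}{V}}\le 1$ for unitary $V$, with equality iff $V$ is a scalar of modulus one. Hence $\norm{U}_{Q^{k+1}}^{2^{k+1}}$ is an average of complex numbers of modulus at most $1$. The first thing to check is that it is in fact real and nonnegative, for instance by writing the last derivative as a Gram-type expression. Granting this, $\norm{U}_{Q^{k+1}}=1$ holds iff for all $h_1,\dots,h_{k+1}\in A$ the operator $\partial_{h_{k+1},\ldots,h_1}U$ is a scalar.

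\textbf{Step 2: peeling off the first derivative.} Compute $\partial_{h_1}U=W_{h_1}UW_{h_1}^*U^*$. Set $V_{h}:=UW_hU^*$, so that $\partial_{h_1}U=W_{h_1}V_{h_1}^*$. The right-multiplication by $T^*$ in the definition of $\partial$ makes the subsequent derivatives act on a product, and I would show that
\[
\partial_{h_{k+1},\ldots,h_2}(W_{h_1}V_{h_1}^*)
\]
equals, up to a scalar, $\partial_{h_{k+1},\ldots,h_2}(V_{h_1}^*)$. The point is that the derivatives of the Pauli factor $W_{h_1}$ are scalars by \eqref{eq:twist}. The expected identity is therefore that the $(k+1)$-fold derivatives of $U$ are scalar iff, for every $h_1$, all $k$-fold derivatives of $V_{h_1}^*$ (equivalently of $V_{h_1}$) are scalar. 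This is the main obstacle: $\partial$ is not multiplicative, since $\partial_h(ST)=W_hSTW_h^*T^*S^*$. I would handle it by proving directly that if $\partial_h W$ is scalar for every $h$, then the property ``all iterated derivatives of order $k$ are scalar'' is preserved under multiplying by $W$. To do this I would induct on the order, using that conjugation by $W_h$ of a scalar-derivative element changes it only by a scalar.

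\textbf{Step 3: induction.} Define $S_k$ as the set of unitaries all of whose $k$-fold derivatives are scalar. By Step 1, $U\in S_{k+1}$ iff $\norm{U}_{Q^{k+1}}=1$.

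\emph{Base case.} $S_1$ is the set of $U$ with $W_hUW_h^*U^*$ scalar for all $h$. Since the Weyl operators span $\mathrm M_N(\C)$ and $UW_hU^*$ commutes with every $W_{h'}$ up to a phase, this set is the Pauli group times $\U(1)$. We read $\CC^{(0)}$ accordingly, and the case $k=1$ (the Clifford group) follows from Step 2 once $S_1=\CC^{(1)}\cdot\U(1)$ is known.

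\emph{Inductive step.} Step 2 gives $U\in S_{k+1}$ iff $UW_hU^*\in S_k=\U(1)\cdot\CC^{(k-1)}$ for all $h$. This is the defining condition of $\CC^{(k)}$ up to phases. The phases are harmless: $UW_hU^*$ has the same order as $W_h$, which pins down the allowed scalars, and the hierarchy levels are closed under the relevant phase multiples in $\PG$. Checking this phase bookkeeping carefully, especially for $p=2$ where $D_p=4$, is a secondary technical point.
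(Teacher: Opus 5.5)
The paper does not prove this theorem (it is quoted from Bu, Gu, and Jaffe), so your argument has to stand on its own. As written it does not: Step~1 contains an error that propagates.

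If an average of finitely many numbers in the closed unit disk equals $1$, then every term equals $1$, not merely has modulus $1$. Realness of the average does not help; this is exactly the argument used in the proof of Proposition~\ref{prop:exact-extremality}. Hence $\norm{U}_{Q^{k+1}}=1$ if and only if $\partial_{h_{k+1},\ldots,h_1}U=\Id$ for all $h_i$. Since $\partial_hT=\Id$ for every $h$ exactly when $T$ commutes with all Weyl operators, i.e.\ is a scalar, this says $U\in S_k$, not $U\in S_{k+1}$. Your criterion already fails on simple examples:
\begin{itemize}
\item $\partial_hW_a=\omega^{[h,a]}\Id$ is scalar for every $h$, yet $\norm{W_a}_{Q^1}=\lvert\tr W_a\rvert/N=0$ for $a\neq0$;
\item for the Hadamard gate $H$ ($n=1$, $p=2$), every second derivative is a scalar, but $\partial_{(1,0),(0,1)}H=ZXZX=-\Id$, so $\norm{H}_{Q^2}<1$.
\end{itemize}

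Step~3 then compensates with an identification that contradicts your own computations. You assert $S_k=\U(1)\cdot\CC^{(k-1)}$ and ``read $\CC^{(0)}$ accordingly''. But your base case gives $S_1=\U(1)\cdot\PG=\U(1)\cdot\CC^{(1)}$. Your recursion then gives $S_2=\{U:UW_hU^*\in\U(1)\cdot\PG\ \forall h\}$, which is the Clifford group $\CC^{(2)}$, not $\U(1)\cdot\CC^{(1)}$. Taken literally, Steps~1--3 prove that $\norm{U}_{Q^2}=1$ iff $U$ is Clifford (you even label the case $k=1$ as ``the Clifford group''), and the Hadamard example refutes this. Your final statement comes out right only because two index shifts cancel.

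The repair keeps your strategy:
\begin{itemize}
\item Use the correct criterion $\norm{U}_{Q^{k+1}}=1\iff U\in S_k$.
\item Your Step~2 is sound for $k\ge1$. Precisely, $\partial_h(W_aT)=\omega^{[h,a]}W_a(\partial_hT)W_a^*$, and conjugation by a Weyl operator commutes with $\partial$, so the relation holds up to a scalar \emph{and} a conjugation. Also $V_h^*$ is a phase times $V_{-h}$, which justifies passing from $V_h^*$ to $V_h$. This gives $S_{k+1}=\{U:UW_hU^*\in S_k\ \forall h\}$.
\item With $S_1=\U(1)\cdot\PG$, and using $W_h^p=\Id$ to fix the phases at level two, this yields $S_k=\CC^{(k)}$ for $k\ge2$, which is the theorem.
\end{itemize}

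Finally, your remark that the phases are harmless fails at the bottom level. Since $\norm{e^{i\theta}\Id}_{Q^2}=1$ for every $\theta$, the case $k=1$ is only true with $\CC^{(1)}$ read as $\U(1)\cdot\PG$.
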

We define the matrix-valued uniformity norms introduced by Gowers and Hatami in \cite{GH17}.  Let $G$ be a finite group. For a function $F:G\to \mathrm M_N(\C)$, the discrete derivative in direction $h\in G$ is defined by
\[
    (\Delta_h F)(x):=F(hx)F(x)^*.
\]
For $h_1,\ldots,h_k\in G$, put
\[
    \Delta_{h_k,\ldots,h_1}:=
    \Delta_{h_k}\circ\cdots\circ\Delta_{h_1}.
\]
For $k\geq 1$, the $k$th uniformity norm of
$F$ is defined by the formula 
\[
    \norm{F}_{U^k(G)}^{2^k}
    :=
    \E_{x,h_1,\ldots,h_k\in G}
    \ip{\Id}{(\Delta_{h_k,\ldots,h_1}F)(x)}.
\]
Define lists of cube words
\[
    c_k(h_1,\ldots,h_k)
    =
    \bigl(c_{1,k}(h_1,\ldots,h_k),\ldots,
          c_{2^k,k}(h_1,\ldots,h_k)\bigr)
\]
recursively by
\[
    c_0=(1),\qquad c_1(h_1)=(h_1,1),
\]
and
\[
    c_{k+1}(h_1,\ldots,h_{k+1})
    =
    \bigl(c_k(h_1,\ldots,h_k)h_{k+1},
          \overline{c_k(h_1,\ldots,h_k)}\bigr),
\]
where multiplication by \(h_{k+1}\) is applied to each entry of the list and
\(\overline{c_k}\) denotes the reversed list.

For functions
\(F_1,\ldots,F_{2^k}:G\to \mathrm M_N(\C)\), define the $k$th
Gowers multilinear form by the formula
\[
    \Lambda_{k,G}(F_1,\ldots,F_{2^k})
    :=
    \E_{x,h_1,\ldots,h_k\in G}
    \ip{\Id}{\prod_{i=1}^{2^k}
        F_i\bigl(c_{i,k}(h_1,\ldots,h_k)x\bigr)^{[i]}},
\]
where the product is ordered from left to right and
\[
    B^{[i]} :=
    \begin{cases}
        B, & i \ \mathrm{odd},\\
        B^*, & i \ \mathrm{even}.
    \end{cases}
\]
Then
\[
    \Lambda_{k,G}(F,\ldots,F)
    =
    \norm{F}_{U^k(G)}^{2^k}.
\]
When \(G=A\) is written additively, we write
\(c_{i,k}(h_1,\ldots,h_k)+x\) instead of
\(c_{i,k}(h_1,\ldots,h_k)x\). 

The following basic properties were established in \cite[Appendix~A]{JT26}. 
\begin{proposition}\label{prop:basicproperties}
Let \(G\) be a finite group.
\begin{itemize}
    \item[(i)] \emph{Gowers--Cauchy--Schwarz inequality.}\par
Let $k\geq 1$ and let
$F_1,\ldots,F_{2^k}:G\to \mathrm M_N(\C)$. Then
\[
    \left|\Lambda_{k,G}(F_1,\ldots,F_{2^k})\right|
    \leq
    \prod_{i=1}^{2^k}
    \norm{F_i}_{U^k(G)}.
\]
\item[(ii)] \emph{Monotonicity}: Let $k\geq 1$ and let $F:G\to \mathrm M_N(\C)$. Then
\[
    \norm{F}_{U^k(G)}
    \leq
    \norm{F}_{U^{k+1}(G)}.
\]
\item[(iii)] \emph{Norm properties}: Let $k\geq 1$ and let $F_1,F_2:G\to \mathrm M_N(\C)$. Then
\[
    \norm{F_1+F_2}_{U^k(G)}
    \leq
    \norm{F_1}_{U^k(G)}
    +
    \norm{F_2}_{U^k(G)}.
\]
Moreover, for $k\geq 2$, the map
\[
    F\longmapsto \norm{F}_{U^k(G)}
\]
defines a norm on the vector space of functions
$G\to \mathrm M_N(\C)$. 
\end{itemize}
\end{proposition}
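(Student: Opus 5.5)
The plan is to prove Proposition~\ref{prop:basicproperties} by the standard Gowers argument, adapted to the noncommutative, matrix-valued setting. Throughout, the cube structure $c_k$ replaces the usual commutative cube $\{0,1\}^k$.

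\textbf{Step 1: a Cauchy--Schwarz step in the last variable.} First I unfold the recursive definition $c_{k+1}=(c_k h_{k+1},\overline{c_k})$. This writes
\[
\Lambda_{k,G}(F_1,\ldots,F_{2^k})=\E_{y,h_1,\ldots,h_{k-1}}\E_{h_k}\ip{P(h_k y)^*}{Q(y)},
\]
up to the precise placement of adjoints. Here $P$ collects the first $2^{k-1}$ factors and depends only on the point $h_k y$. The factor $Q$ collects the reversed second half, evaluated at $y$. To get this form, I substitute $y=x$ and $z=h_kx$ and use that the inner product is a normalized trace. The reversal of the list $\overline{c_k}$ is exactly what makes the second block appear as the adjoint of a product of the same shape. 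Since the odd/even pattern $[i]$ is preserved under reversal for blocks of even length, the second block is $\bigl(\prod\cdots\bigr)^*$ of a product built from $F_{2^{k-1}+1},\ldots,F_{2^k}$.

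For fixed $h_1,\ldots,h_{k-1}$, the expectation over $y,z$ then factors as $\ip{\E_z P_z}{\E_y Q_y}$, a trace pairing of two matrix averages. Cauchy--Schwarz for the Hilbert--Schmidt inner product, followed by Cauchy--Schwarz over $h_1,\ldots,h_{k-1}$, gives
\[
|\Lambda_k(F_1,\ldots,F_{2^k})|^2\le \Lambda_k(F_1,\ldots,F_{2^{k-1}},\tilde F_{\ldots})\,\Lambda_k(\ldots),
\]
where each factor on the right uses only one half of the functions, duplicated. The nonnegativity of these factors is itself part of the argument: each is of the form $\E\norm{\E_z P_z}_2^2\ge0$.

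\textbf{Step 2: iterate and symmetrize.} Repeating Step 1 across the $k$ cube directions gives (i) in the usual way. This requires a symmetry lemma: $\Lambda_k$ is invariant, up to relabeling, under permuting the roles of $h_1,\ldots,h_k$, or at least under a sufficient set of substitutions $h_j\mapsto h_j^{-1}$ and conjugations. That lemma is what lets each of the $k$ coordinates be treated as the last one. I expect proving it, and the bookkeeping behind it, to be the main obstacle, since in a nonabelian $G$ with matrix values the cube is not literally symmetric. My first attempt would be to verify directly, via the change of variables $x\mapsto c\,x$ and cyclicity of the trace, that moving $h_j$ to the outermost position maps $c_k$ to a relabeled cube. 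Failing that, I would instead induct on $k$ by applying the inductive hypothesis inside the inner $(k-1)$-dimensional form for fixed $h_k$. In that approach, each function $F_i$ is replaced by the product $x\mapsto F_i(h_kx)F_{i'}(x)^*$, and the result is finished with H\"older's inequality over $h_k$.

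\textbf{Step 3: monotonicity and the norm properties.} For (ii), I write $\norm{F}_{U^{k+1}}^{2^{k+1}}=\E_{h}\norm{\Delta_hF}_{U^k}^{2^k}$. It then suffices to show $\E_h\norm{\Delta_hF}_{U^k}^{2^k}\ge \norm{F}_{U^k}^{2^{k+1}}$, which follows from the Step 1 identity $\norm{F}_{U^{k+1}}^{2^{k+1}}=\E\norm{\E_x(\cdots)}_2^2$ together with Jensen's inequality. For (iii), I expand $\norm{F_1+F_2}^{2^k}_{U^k}$ multilinearly into $2^{2^k}$ terms $\Lambda_k(F_{j_1},\ldots)$. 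Bounding each term by (i) gives $(\norm{F_1}+\norm{F_2})^{2^k}$. Homogeneity is immediate. For definiteness when $k\ge2$, I use (ii) to get $\norm{F}_{U^2}\le\norm{F}_{U^k}$, and then identify $\norm{F}_{U^2}$ via the matrix Fourier transform, or $\E_x$ averaging, as a positive definite quantity.
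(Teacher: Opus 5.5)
\textbf{There is a genuine gap in Step 2, i.e.\ in (i) for $k\ge3$.} Your Step 1 is correct: splitting $c_k=(c_{k-1}h_k,\overline{c_{k-1}})$ and substituting $z=h_kx$ gives Cauchy--Schwarz in the outermost direction $h_k$. From it, your proof of (ii) (via $|\ip{\Id}{M}|\le\norm{M}_2$ and Jensen) and your reduction of (iii) to (i) go through. The paper does not prove this proposition itself; it imports it from \cite[Appendix~A]{JT26}. Because of the gap, the triangle inequality is also unproved for $k\ge3$. The symmetry lemma you rely on is false for matrix values, even for abelian $G$. The product defining $\Lambda_k$ runs along the reflected Gray cycle on $\{0,1\}^k$. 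In this cycle coordinate $k$ flips only twice, while coordinate $1$ flips $2^{k-1}$ times; for $k=3$ the flip sequence is $1,2,1,3,1,2,1,3$. The substitutions $x\mapsto cx$ and $h_j\mapsto h_j^{-1}$, together with cyclicity and adjoints, can only rotate or reverse this cyclic word of matrices. So no relabeling puts $h_j$, $j<k$, in the role of $h_k$. Your direct verification works only for $k=2$, where the $4$-cycle is symmetric (rotate the trace and put $y=h_1x$, $s=x^{-1}h_2x$).

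The fallback does not close the gap either. A decomposition $\Lambda_k=\E_h\Lambda_{k-1}(\dots)$ into functions $F_a(h\,\cdot)F_b(\cdot)^*$ exists only for the innermost derivative $h_1$, where positions $2j-1,2j$ pair up; it does not exist for $h_k$. After the inductive hypothesis and H\"older you are left with
\[
\E_h\norm{F_a(h\,\cdot)F_b^*}_{U^{k-1}}^{2^{k-1}}=\Lambda_{k,G}(F_a,F_b,F_b,F_a,F_a,F_b,F_b,F_a,\dots),
\]
where $F_a$ sits on the face $\{\varepsilon_1=1\}$. Bounding this by $\norm{F_a}_{U^k}^{2^{k-1}}\norm{F_b}_{U^k}^{2^{k-1}}$ is itself a Cauchy--Schwarz step in direction $h_1$. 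Step 1 cannot supply it, because the two faces alternate around the cycle. For $k=3$, after a cyclic rotation, the integrand is $\ip{\Id}{X_1Y_1X_2Y_2}$, with the $X_i$ built from $F_a$ and the $Y_i$ from $F_b$. Hilbert--Schmidt Cauchy--Schwarz on $\mathrm M_N(\C)\otimes\mathrm M_N(\C)$ then produces products of two traces, not $\norm{F_a}_{U^3}^8$. As written, your argument therefore proves (i) only for $k\le2$. An additional idea is needed to handle the interleaved directions, and that is exactly the nontrivial content the paper takes from \cite{JT26}.
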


\section{The pullback identity and some consequences}

Define $\alpha\colon A\times \mathrm M_N(\C)\to \mathrm M_N(\C)$ by 
\[
    \alpha_a(S):=W_aSW_a^*,
    \qquad
    a\in A, S\in \mathrm M_N(\C).
\]
Since phases disappear under conjugation, it follows from \eqref{eq:twist} that
\begin{equation}
    \label{eq:action}
     \alpha_a\alpha_b=\alpha_{a+b}.
\end{equation}
Thus \(\alpha\) defines a trace-preserving action of the additive group
\(A\) on the \(C^*\)-algebra \(\mathrm M_N(\C)\) by \(*\)-automorphisms. 

 For $S\in \mathrm M_N(\C)$, we define its Weyl orbit map 
\begin{equation}\label{eq:weylorbit}
    F_S:A\to \mathrm M_N(\C),
    \qquad
    F_S(a):=\alpha_a(S).
\end{equation}
Using this map, we establish the following correspondence between the quantum uniformity norms and the matrix-valued uniformity norms. 
\begin{theorem}
\label{thm:pullback}
For every $S\in \mathrm M_N(\C)$ and every $k\geq1$,
\[
    \norm{S}_{Q^k}
    =
    \norm{F_S}_{U^k(A)}.
\]
\end{theorem}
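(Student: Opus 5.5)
The plan is to prove, by induction on $k$, a pointwise identity between iterated derivatives:
\[
    (\Delta_{h_k,\ldots,h_1}F_S)(x)=\alpha_x\bigl(\partial_{h_k,\ldots,h_1}S\bigr)
    \qquad\text{for all } x,h_1,\ldots,h_k\in A.
\]
Once this holds, the theorem follows immediately. Since $\alpha_x$ preserves the trace and fixes $\Id$,
\[
    \ip{\Id}{\alpha_x(T)}=\tfrac1N\tr(W_xTW_x^*)=\ip{\Id}{T}.
\]
So the average over $x$ in the definition of $\norm{F_S}_{U^k(A)}^{2^k}$ is trivial, and what remains is exactly $\E_{h_1,\ldots,h_k}\ip{\Id}{\partial_{h_k,\ldots,h_1}S}=\norm{S}_{Q^k}^{2^k}$. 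Taking $2^k$-th roots of these nonnegative quantities then gives the claimed equality of norms.

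The inductive step rests on the observation that for any $T\in\mathrm M_N(\C)$ and $h\in A$,
\[
    \Delta_h F_T = F_{\partial_h T}.
\]
To see this, use \eqref{eq:action} and the fact that $\alpha_x$ is a $*$-automorphism:
\[
    (\Delta_hF_T)(x)=\alpha_{h+x}(T)\,\alpha_x(T)^*=\alpha_x\bigl(\alpha_h(T)\bigr)\,\alpha_x(T^*)=\alpha_x\bigl(\alpha_h(T)T^*\bigr)=\alpha_x(\partial_hT).
\]
Here $\alpha_h(T)T^*=W_hTW_h^*T^*=\partial_hT$ by definition. Applying this with $T=S$, then $T=\partial_{h_1}S$, and so on, gives $\Delta_{h_k,\ldots,h_1}F_S=F_{\partial_{h_k,\ldots,h_1}S}$, which is the displayed identity. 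The case $k=0$ is just the definition $F_S(x)=\alpha_x(S)$.

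No step is a real obstacle. The only points needing care are the order conventions. First, $\Delta_h F(x)=F(hx)F(x)^*$ has $h$ acting on the left, which becomes $h+x$ in the abelian group $A$. Second, $\alpha_{h+x}=\alpha_x\alpha_h$ by \eqref{eq:action} together with commutativity, and the scalar phases from \eqref{eq:twist} cancel under conjugation. Both conventions have to line up with the composition order in $\partial_{h_k,\ldots,h_1}$; once they do, the argument is a direct computation.
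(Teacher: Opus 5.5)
Your proposal is correct and follows the paper's proof essentially verbatim. You establish the intertwining $\Delta_h F_T = F_{\partial_h T}$ via $\alpha_{h+x}=\alpha_x\alpha_h$ and the $*$-automorphism property, iterate it, and then use trace preservation of $\alpha_x$ to drop the average over $x$, exactly as the paper does.
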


\begin{proof}
Define
\[
    \mathcal T:\mathrm M_N(\C)\to\{A\to \mathrm M_N(\C)\},
    \qquad
    \mathcal T(S)(x):=\alpha_x(S).
\]
Then $F_S=\mathcal T(S)$.  For $h,x\in A$ and $S\in \mathrm M_N(\C)$,
\[
\begin{aligned}
    (\Delta_h \mathcal T(S))(x)
    &=
    \mathcal T(S)(h+x)\mathcal T(S)(x)^*                                      \\
    &=
    \alpha_{h+x}(S)\alpha_x(S)^*                            \\
    &=
    \alpha_x(\alpha_h(S)S^*)                                \\
    &=
    \mathcal T(\partial_hS)(x).
\end{aligned}
\]
Thus
\[
    \Delta_h\mathcal T(S)=\mathcal T(\partial_hS).
\]
Iterating gives
\[
    (\Delta_{h_k,\ldots,h_1}F_S)(x)
    =
    \alpha_x\!\left(\partial_{h_k,\ldots,h_1}S\right).
\]
Taking normalized traces removes the outer conjugation:
\[
    \ip{\Id}{(\Delta_{h_k,\ldots,h_1}F_S)(x)}
    =
    \ip{\Id}{\partial_{h_k,\ldots,h_1}S}.
\]
Averaging over $x,h_1,\ldots,h_k$ proves the identity.
\end{proof}

We first make explicit the relation between the quantum multilinear
functional of Bu, Gu, and Jaffe and the matrix-valued Gowers multilinear forms defined in Section \ref{sec:prelim}. 

Define an ordering
\[
    \sigma^{(k)}
    =
    \bigl(\sigma^{(k)}_1,\ldots,\sigma^{(k)}_{2^k}\bigr)
\]
of \(\{0,1\}^k\) recursively by
\[
    \sigma^{(0)}=(\varnothing),
    \qquad
    \sigma^{(1)}=((0),(1)),
\]
and
\[
    \sigma^{(j+1)}
    =
    \bigl(
        (0,\sigma^{(j)}_1),\ldots,(0,\sigma^{(j)}_{2^j}),
        (1,\sigma^{(j)}_{2^j}),\ldots,(1,\sigma^{(j)}_1)
    \bigr).
\]

Let $(T_\varepsilon)_{\varepsilon\in\{0,1\}^k}
    \subseteq \mathrm M_N(\C)$. 
For \(j\geq 0\), define \(\mathcal D_j\) recursively by
\[
    \mathcal D_0(T_\varnothing):=T_\varnothing
\]
and
\[
\begin{aligned}
    &\mathcal D_{j+1}
    \bigl((T_\varepsilon)_{\varepsilon\in\{0,1\}^{j+1}};
    h_{j+1},\ldots,h_1\bigr)                                      \\
    &\qquad :=
    \alpha_{h_{j+1}}\!\left(
    \mathcal D_j
    \bigl((T_{(0,\eta)})_{\eta\in\{0,1\}^j};
    h_j,\ldots,h_1\bigr)\right)                                    \\
    &\qquad\quad \cdot
    \mathcal D_j
    \bigl((T_{(1,\eta)})_{\eta\in\{0,1\}^j};
    h_j,\ldots,h_1\bigr)^* .
\end{aligned}
\]
Then the quantum multilinear form introduced by Bu, Gu, and Jaffe in \cite[Definition 35]{BGJ25} is the formula 
\[
    \Lambda_k^Q
    \bigl((T_\varepsilon)_{\varepsilon\in\{0,1\}^k}\bigr)
    :=
    \E_{h_1,\ldots,h_k\in A}
    \ip{\Id}{
    \mathcal D_k
    \bigl((T_\varepsilon)_{\varepsilon\in\{0,1\}^k};
    h_k,\ldots,h_1\bigr)} .
\]

\begin{lemma}\label{lem:multlin}
For every \(k\geq 1\) and every family $(T_\varepsilon)_{\varepsilon\in\{0,1\}^k}
    \subseteq \mathrm M_N(\C)$, we have 
\[
    \Lambda_k^Q
    \bigl((T_\varepsilon)_{\varepsilon\in\{0,1\}^k}\bigr)
    =
    \Lambda_{k,A}
    \bigl(
        F_{T_{\sigma^{(k)}_1}},
        \ldots,
        F_{T_{\sigma^{(k)}_{2^k}}}
    \bigr), 
\]
where $F_T$ denotes the Weyl orbit map \eqref{eq:weylorbit}. 
\end{lemma}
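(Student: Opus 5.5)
The plan is to prove a pointwise identity before averaging. We show by induction on $k$ that for all $x,h_1,\ldots,h_k\in A$,
\[
    \prod_{i=1}^{2^k} F_{T_{\sigma^{(k)}_i}}\bigl(c_{i,k}(h_1,\ldots,h_k)+x\bigr)^{[i]}
    =
    \alpha_x\Bigl(\mathcal D_k\bigl((T_\varepsilon)_{\varepsilon\in\{0,1\}^k};h_k,\ldots,h_1\bigr)\Bigr),
\]
with the product ordered from left to right. Once this holds, we take the normalized trace. Since $\alpha_x$ is a trace-preserving $*$-automorphism, $\ip{\Id}{\alpha_x(\cdot)}=\ip{\Id}{\cdot}$, exactly as in the proof of Theorem~\ref{thm:pullback}. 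The dependence on $x$ then disappears, and averaging over $x,h_1,\ldots,h_k$ gives $\Lambda_{k,A}$ on the left and $\Lambda_k^Q$ on the right.

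The base case $k=0$ is $F_{T_\varnothing}(x)=\alpha_x(T_\varnothing)$, which is the definition. (One can also check $k=1$ directly: $\alpha_{h_1+x}(T_0)\alpha_x(T_1)^*=\alpha_x(\alpha_{h_1}(T_0)T_1^*)$.)

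For the inductive step from $j$ to $j+1$, we split the product of $2^{j+1}$ factors into its first half and its second half.

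\textbf{First half.} By the recursion, $c_{i,j+1}=c_{i,j}+h_{j+1}$ for $i\le 2^j$, and $\sigma^{(j+1)}_i=(0,\sigma^{(j)}_i)$. The parities $[i]$ agree with those at level $j$. So the first half is the level-$j$ product for the family $(T_{(0,\eta)})_\eta$, evaluated at the base point $x+h_{j+1}$. By induction it equals
\[
    \alpha_{x+h_{j+1}}(\mathcal D_j^{(0)})=\alpha_x\bigl(\alpha_{h_{j+1}}(\mathcal D_j^{(0)})\bigr),
\]
using \eqref{eq:action}.

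\textbf{Second half.} For $i=2^j+m$, the recursion gives $c_{i,j+1}=c_{2^j+1-m,j}$ and $\sigma^{(j+1)}_i=(1,\sigma^{(j)}_{2^j+1-m})$. Since $2^j$ is even for $j\ge1$, the parity of $i$ equals the parity of $m$. The index $i'=2^j+1-m$ has the opposite parity, so the exponent $[i]$ is the opposite of the exponent $[i']$ carried at level $j$.

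Hence the second half is the level-$j$ product for the family $(T_{(1,\eta)})_\eta$ at base point $x$, written in reversed order, with each factor's star-status flipped. This is precisely the adjoint of that level-$j$ product, because $(B_1\cdots B_r)^*=B_r^*\cdots B_1^*$. By induction it equals $\alpha_x(\mathcal D_j^{(1)})^*=\alpha_x\bigl((\mathcal D_j^{(1)})^*\bigr)$.

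Multiplying the two halves and using that $\alpha_x$ is multiplicative gives $\alpha_x(\mathcal D_{j+1})$, which closes the induction.

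The main obstacle is the index bookkeeping in the second half: checking that reversing the cube word list, reversing the $\sigma$ ordering, and the parity flip together reproduce exactly the adjoint. The step $j=0\to1$ needs separate care, since $2^0=1$ is odd; it is covered by the direct $k=1$ check above, so the general parity argument is only needed for $j\ge1$.
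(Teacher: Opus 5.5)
Your proof is correct and is essentially the paper's argument. The paper runs the same half-splitting induction but routes it through an auxiliary recursive product $\mathcal Q_j$: it proves separately that $\mathcal Q_j$ expands into the cube-word product and that $\mathcal Q_j$ of the Weyl orbit maps equals $\alpha_x(\mathcal D_j)$, whereas you merge these two inductions into one. One small simplification: your separate treatment of $j=0\to1$ is not needed, because $i+i'=2^{j+1}+1$ is always odd, so $i=2^j+m$ and $i'=2^j+1-m$ have opposite parity for every $j\ge0$.
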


\begin{proof}
We first introduce an auxiliary recursive multilinear expression.  For
matrix-valued functions
\[
    (F_\varepsilon)_{\varepsilon\in\{0,1\}^j},
    \qquad
    F_\varepsilon:A\to \mathrm M_N(\C),
\]
set
\[
    \mathcal Q_0(F_\varnothing;x):=F_\varnothing(x),
\]
and define
\[
\begin{aligned}
    &\mathcal Q_{j+1}
    \bigl((F_\varepsilon)_{\varepsilon\in\{0,1\}^{j+1}};
    x,h_1,\ldots,h_{j+1}\bigr)                                    \\
    &\qquad :=
    \mathcal Q_j
    \bigl((F_{(0,\eta)})_{\eta\in\{0,1\}^j};
    h_{j+1}+x,h_1,\ldots,h_j\bigr)                                  \\
    &\qquad\quad \cdot
    \mathcal Q_j
    \bigl((F_{(1,\eta)})_{\eta\in\{0,1\}^j};
    x,h_1,\ldots,h_j\bigr)^* .
\end{aligned}
\]

We claim first that
\begin{equation}\label{eq:Q-product-expansion}
    \mathcal Q_j
    \bigl((F_\varepsilon)_{\varepsilon\in\{0,1\}^j};
    x,h_1,\ldots,h_j\bigr)
    =
    \prod_{i=1}^{2^j}
    F_{\sigma_i^{(j)}}
    \bigl(c_{i,j}(h_1,\ldots,h_j)+x\bigr)^{[i]} .
\end{equation}
This is immediate for \(j=0\).  Assume it holds at level \(j\), and put
\(m=2^j\).  By the recursive definition of \(\mathcal Q_{j+1}\), the first
block is
\[
    \prod_{i=1}^{m}
    F_{(0,\sigma_i^{(j)})}
    \bigl(c_{i,j}(h_1,\ldots,h_j)+h_{j+1}+x\bigr)^{[i]} .
\]
The second block, after taking the adjoint, becomes
\[
    \prod_{r=1}^{m}
    F_{(1,\sigma_{m+1-r}^{(j)})}
    \bigl(c_{m+1-r,j}(h_1,\ldots,h_j)+x\bigr)^{[m+r]} .
\]
Here the adjoint reverses the order of the product and flips the parity of
the exponent \([\,\cdot\,]\).  Hence the recursive definitions of
\(c_{j+1}\) and \(\sigma^{(j+1)}\) give exactly
\[
    \mathcal Q_{j+1}
    \bigl((F_\varepsilon)_\varepsilon;
    x,h_1,\ldots,h_{j+1}\bigr)
    =
    \prod_{i=1}^{2^{j+1}}
    F_{\sigma_i^{(j+1)}}
    \bigl(c_{i,j+1}(h_1,\ldots,h_{j+1})+x\bigr)^{[i]} .
\]
This proves \eqref{eq:Q-product-expansion} for all \(j\).

We next prove the Weyl-orbit identity
\begin{equation}\label{eq:Q-D-intertwining}
    \mathcal Q_j
    \bigl((F_{T_\varepsilon})_{\varepsilon\in\{0,1\}^j};
    x,h_1,\ldots,h_j\bigr)
    =
    \alpha_x\!\left(
    \mathcal D_j
    \bigl((T_\varepsilon)_{\varepsilon\in\{0,1\}^j};
    h_j,\ldots,h_1\bigr)\right). 
\end{equation}
For \(j=0\), this is immediate:
\[
    \mathcal Q_0(F_{T_\varnothing};x)
    =
    F_{T_\varnothing}(x)
    =
    \alpha_x(T_\varnothing)
    =
    \alpha_x(\mathcal D_0(T_\varnothing)).
\]

Assume \eqref{eq:Q-D-intertwining} holds at level \(j\).  Put
\[
    D_0
    :=
    \mathcal D_j
    \bigl((T_{(0,\eta)})_{\eta\in\{0,1\}^j};
    h_j,\ldots,h_1\bigr),
    \qquad
    D_1
    :=
    \mathcal D_j
    \bigl((T_{(1,\eta)})_{\eta\in\{0,1\}^j};
    h_j,\ldots,h_1\bigr).
\]
Then
\[
\begin{aligned}
    &\mathcal Q_{j+1}
    \bigl((F_{T_\varepsilon})_{\varepsilon\in\{0,1\}^{j+1}};
    x,h_1,\ldots,h_{j+1}\bigr)                                    \\
    &\qquad =
    \mathcal Q_j
    \bigl((F_{T_{(0,\eta)}})_\eta;
    h_{j+1}+x,h_1,\ldots,h_j\bigr)
    \mathcal Q_j
    \bigl((F_{T_{(1,\eta)}})_\eta;
    x,h_1,\ldots,h_j\bigr)^*                                      \\
    &\qquad =
    \alpha_{h_{j+1}+x}(D_0)\,\alpha_x(D_1)^* .
\end{aligned}
\]
Since \(\alpha\) is an action by \(*\)-automorphisms,
\[
\begin{aligned}
    \alpha_{h_{j+1}+x}(D_0)\,\alpha_x(D_1)^*
    &=
    \alpha_x(\alpha_{h_{j+1}}(D_0))\,\alpha_x(D_1^*)                 \\
    &=
    \alpha_x\!\left(\alpha_{h_{j+1}}(D_0)D_1^*\right)                \\
    &=
    \alpha_x\!\left(
    \mathcal D_{j+1}
    \bigl((T_\varepsilon)_{\varepsilon\in\{0,1\}^{j+1}};
    h_{j+1},h_j,\ldots,h_1\bigr)\right).
\end{aligned}
\]
This proves \eqref{eq:Q-D-intertwining} for all \(j\).

Finally, applying \eqref{eq:Q-product-expansion} with
\(F_\varepsilon=F_{T_\varepsilon}\), we get
\[
\begin{aligned}
    &\Lambda_{k,A}
    \bigl(
        F_{T_{\sigma^{(k)}_1}},
        \ldots,
        F_{T_{\sigma^{(k)}_{2^k}}}
    \bigr)                                                        \\
    &\qquad =
    \E_{x,h_1,\ldots,h_k\in A}
    \ip{\Id}{
    \mathcal Q_k
    \bigl((F_{T_\varepsilon})_{\varepsilon\in\{0,1\}^k};
    x,h_1,\ldots,h_k\bigr)}                                      \\
    &\qquad =
    \E_{x,h_1,\ldots,h_k\in A}
    \ip{\Id}{
    \alpha_x\!\left(
    \mathcal D_k
    \bigl((T_\varepsilon)_{\varepsilon\in\{0,1\}^k};
    h_k,\ldots,h_1\bigr)\right)} .
\end{aligned}
\]
Since $\alpha$ is trace-preserving, 
\[
\begin{aligned}
    \Lambda_{k,A}
    \bigl(
        F_{T_{\sigma^{(k)}_1}},
        \ldots,
        F_{T_{\sigma^{(k)}_{2^k}}}
    \bigr)
    &=
    \E_{h_1,\ldots,h_k\in A}
    \ip{\Id}{
    \mathcal D_k
    \bigl((T_\varepsilon)_{\varepsilon\in\{0,1\}^k};
    h_k,\ldots,h_1\bigr)}                                      \\
    &=
    \Lambda_k^Q
    \bigl((T_\varepsilon)_{\varepsilon\in\{0,1\}^k}\bigr).
\end{aligned}
\]
This proves the desired identity.
\end{proof}

We are ready to deduce the basic analytic properties of the quantum uniformity norms from their matrix-valued analogues. 
\begin{proposition}
    \begin{itemize}
    \item[(i)] \emph{Quantum Gowers--Cauchy--Schwarz inequality.}\par
    Let $k\geq1$ and let
    \[
        (T_\varepsilon)_{\varepsilon\in\{0,1\}^k}
        \subseteq \mathrm M_N(\C) .
    \]
    Then
    \[
        \left|
        \Lambda_k^Q
        \bigl((T_\varepsilon)_{\varepsilon\in\{0,1\}^k}\bigr)
        \right|
        \leq
        \prod_{\varepsilon\in\{0,1\}^k}
        \norm{T_\varepsilon}_{Q^k}.
    \]

    \item[(ii)] \emph{Monotonicity}: Let $k\geq1$ and let
    $S\in\mathrm M_N(\C)$. Then
    \[
        \norm{S}_{Q^k}
        \leq
        \norm{S}_{Q^{k+1}}.
    \]
    \item[(iii)] \emph{Norm properties}: Let $k\geq1$ and let
    $S,T\in\mathrm M_N(\C)$. Then
    \[
        \norm{S+T}_{Q^k}
        \leq
        \norm{S}_{Q^k}
        +
        \norm{T}_{Q^k}.
    \]
    Moreover, for $k\geq2$, the map
    \[
        \mathrm M_N(\C)\to[0,\infty),
        \qquad
        S\mapsto \norm{S}_{Q^k},
    \]
    defines a norm.
\end{itemize}
\end{proposition}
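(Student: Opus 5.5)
The plan is to transfer each item from Proposition~\ref{prop:basicproperties} via Theorem~\ref{thm:pullback} and Lemma~\ref{lem:multlin}, using linearity of the Weyl orbit embedding $\mathcal T\colon S\mapsto F_S$.

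For (i), I apply Lemma~\ref{lem:multlin} to rewrite $\Lambda_k^Q((T_\varepsilon)_\varepsilon)$ as $\Lambda_{k,A}(F_{T_{\sigma^{(k)}_1}},\ldots,F_{T_{\sigma^{(k)}_{2^k}}})$. Proposition~\ref{prop:basicproperties}(i) then bounds its modulus by $\prod_{i=1}^{2^k}\norm{F_{T_{\sigma^{(k)}_i}}}_{U^k(A)}$. I need that $\sigma^{(k)}$ is a genuine enumeration of $\{0,1\}^k$, i.e.\ a bijection $\{1,\ldots,2^k\}\to\{0,1\}^k$. This follows by induction from the recursive definition: the first half lists the words beginning with $0$ and the second half the words beginning with $1$, each half enumerating $\{0,1\}^j$ once. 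The product can therefore be reindexed as $\prod_{\varepsilon}\norm{F_{T_\varepsilon}}_{U^k(A)}$. Theorem~\ref{thm:pullback} turns each factor into $\norm{T_\varepsilon}_{Q^k}$.

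For (ii), I apply Theorem~\ref{thm:pullback} at levels $k$ and $k+1$ together with Proposition~\ref{prop:basicproperties}(ii) applied to $F_S$.

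For (iii), I first note that $\alpha_a$ is linear, so $F_{S+T}=F_S+F_T$ and $F_{\lambda S}=\lambda F_S$. The triangle inequality then follows from Theorem~\ref{thm:pullback} and Proposition~\ref{prop:basicproperties}(iii). Homogeneity $\norm{\lambda S}_{Q^k}=|\lambda|\norm{S}_{Q^k}$ and nonnegativity transfer in the same way, since $\norm{\cdot}_{U^k(A)}$ is a norm for $k\ge2$.

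The only point needing genuine care is definiteness for $k\ge2$: if $\norm{S}_{Q^k}=0$, then $\norm{F_S}_{U^k(A)}=0$. Since $\norm{\cdot}_{U^k(A)}$ is a norm for $k\geq2$, this gives $F_S\equiv0$. Evaluating at $a=0$ gives $F_S(0)=W_0SW_0^*=S$, because $W_0=\Id$. Hence $S=0$, so $\mathcal T$ is injective, and the pullback of a norm under an injective linear map is a norm.

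I expect this injectivity step and the bijectivity of the ordering $\sigma^{(k)}$ to be the only non-formal steps, and both are elementary.
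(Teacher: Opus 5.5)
Your proposal is correct and follows the paper's proof essentially step for step. For (i)--(ii) you use Lemma~\ref{lem:multlin}, Theorem~\ref{thm:pullback} and Proposition~\ref{prop:basicproperties}, and for (iii) you use linearity and injectivity of $S\mapsto F_S$ via $F_S(0)=S$. You also make explicit two points the paper leaves implicit: that $\sigma^{(k)}$ enumerates $\{0,1\}^k$ bijectively, which is needed to reindex the product in (i), and that $W_0=\Id$.
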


\begin{proof}
By Lemma \ref{lem:multlin}, Theorem \ref{thm:pullback}, and
Proposition~\ref{prop:basicproperties}\textup{(i)}, we have
\[
\begin{aligned}
    \left|
    \Lambda_k^Q
    \bigl((T_\varepsilon)_{\varepsilon\in\{0,1\}^k}\bigr)
    \right|
    &=
    \left|
    \Lambda_{k,A}
    \bigl(
        F_{T_{\sigma^{(k)}_1}},
        \ldots,
        F_{T_{\sigma^{(k)}_{2^k}}}
    \bigr)
    \right|                                                     \\
    &\leq
    \prod_{i=1}^{2^k}
    \norm{F_{T_{\sigma^{(k)}_i}}}_{U^k(A)}        \\
    &=
    \prod_{\varepsilon\in\{0,1\}^k}
    \norm{T_\varepsilon}_{Q^k}.
\end{aligned}
\]
Similarly, Theorem \ref{thm:pullback} and
Proposition~\ref{prop:basicproperties}\textup{(ii)} give
\[
    \norm{S}_{Q^k}
    =
    \norm{F_S}_{U^k(A)}
    \leq
    \norm{F_S}_{U^{k+1}(A)}
    =
    \norm{S}_{Q^{k+1}}.
\]
Finally, the Weyl orbit map $S\longmapsto F_S$ is linear and injective, since $F_S(0)=S$. Therefore the triangle inequality
and the norm properties follow from 
Proposition~\ref{prop:basicproperties}\textup{(iii)}.
\end{proof}

We recall Leibman's definition of polynomial maps between
groups \cite{Leibman2002}.  A map \(\phi:G\to H\) is polynomial of degree at most
\(-1\) if it is identically \(1_H\).  For \(k\geq0\), it is polynomial of degree at
most \(k\) if, for every \(h\in G\), the discrete derivative 
\[
    (\Delta_h\phi)(g):=\phi(hg)\phi(g)^{-1}
\]
is polynomial of degree at most \(k-1\). 

We have the following equivalence between the extremizers of quantum and matrix-valued uniformity norms. 

\begin{proposition}
\label{prop:exact-extremality}
Let \(U\in\U(N)\) and let \(k\geq1\).  The following are equivalent:
\begin{enumerate}[label=\textup{(\roman*)}]
    \item \(\norm{U}_{Q^k}=1\);
    \item the Weyl orbit map \(F_U:A\to\U(N)\) 
    has Leibman degree at most \(k-1\).
\end{enumerate}
Consequently, for \(k\geq2\), these conditions are equivalent to
\(U\in\CC^{(k-1)}\).
\end{proposition}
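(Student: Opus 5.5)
By Theorem~\ref{thm:pullback}, condition (i) is equivalent to $\norm{F_U}_{U^k(A)}=1$. So the plan is to prove directly that a unitary-valued map $F:A\to\U(N)$ satisfies $\norm{F}_{U^k(A)}=1$ if and only if $F$ has Leibman degree at most $k-1$. The final clause then follows by combining this with Theorem~\ref{thm:bgj}, applied with $k-1\geq1$ in place of $k$.

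For the extremality argument, write $\Phi_{\mathbf h}:=\Delta_{h_k,\ldots,h_1}F$. Since $F$ is unitary-valued, every iterated derivative is unitary-valued, so $\Phi_{\mathbf h}(x)\in\U(N)$ for all $\mathbf h$ and $x$.
\begin{itemize}
\item For a unitary $V$, $\ip{\Id}{V}=\frac1N\tr V$ has real part at most $1$, with equality if and only if $V=\Id$. Indeed, the eigenvalues of $V$ lie on the unit circle, and equality forces all of them to equal $1$; since $V$ is unitary, hence diagonalizable, this gives $V=\Id$.
\item By definition, $\norm{F}_{U^k(A)}^{2^k}$ is the average of $\ip{\Id}{\Phi_{\mathbf h}(x)}$ over $x,h_1,\ldots,h_k$. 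If this average equals $1$, then every term, each having real part at most $1$, must have real part exactly $1$. Hence $\Phi_{\mathbf h}(x)=\Id$ for all $x,h_1,\ldots,h_k$.
\item Conversely, if all the $\Phi_{\mathbf h}$ are identically $\Id$, the norm is clearly $1$.
\end{itemize}
The question thus reduces to the equivalence
\[
\norm{F}_{U^k(A)}=1 \iff \Delta_{h_k,\ldots,h_1}F\equiv \Id \text{ for all } h_1,\ldots,h_k\in A .
\]

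It remains to identify the vanishing of all $k$-fold derivatives with Leibman degree at most $k-1$, for maps into the group $H=\U(N)$. The derivative $\Delta_h$ used in the norm, $F(h+x)F(x)^*$, coincides with Leibman's derivative $\phi(hg)\phi(g)^{-1}$ because $F(x)^*=F(x)^{-1}$ for unitaries. I would argue by induction on $k$ that $\phi$ has degree at most $k-1$ if and only if $\Delta_{h_k,\ldots,h_1}\phi\equiv 1_H$ for all $h_i$.
\begin{itemize}
\item Base case, $k=0$: degree at most $-1$ means $\phi\equiv 1_H$.
\item Inductive step: $\phi$ has degree at most $k-1$ if and only if each $\Delta_{h_1}\phi$ has degree at most $k-2$. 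By induction, this holds if and only if $\Delta_{h_k,\ldots,h_2}(\Delta_{h_1}\phi)\equiv 1_H$ for all $h_2,\ldots,h_k$.
\end{itemize}
The ordering convention $\Delta_{h_k,\ldots,h_1}=\Delta_{h_k}\circ\cdots\circ\Delta_{h_1}$ matches this recursion, since the innermost derivative is taken first.

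I expect no serious obstacle. The only points needing care are the following.
\begin{itemize}
\item The equality case of the trace bound must use the real part, since a priori the average is only known to be the nonnegative real number $1$. Averaging real parts handles this.
\item The orbit map $F_U$ is indeed unitary-valued, because $\alpha_a(U)=W_aUW_a^*$ is a product of unitaries.
\item For the final clause, $U\in\CC^{(k-1)}$ if and only if $\norm{U}_{Q^k}=1$. This is exactly Theorem~\ref{thm:bgj} at level $k-1$, which requires $k\geq2$.
\end{itemize}
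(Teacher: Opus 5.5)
Your proposal is correct and follows the paper's route. You reduce to $\norm{F_U}_{U^k(A)}=1$ via Theorem~\ref{thm:pullback}, force every iterated derivative of the unitary-valued orbit map to equal $\Id$ by the equality case for normalized traces of unitaries, identify this with Leibman degree at most $k-1$, and apply Theorem~\ref{thm:bgj} at level $k-1$ (your induction and real-part argument just spell out steps the paper states in one line).
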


\begin{proof}
By Theorem \ref{thm:pullback}, we have
\[
    \norm{U}_{Q^k}
    =
    \norm{F_U}_{U^k(A)} .
\]
Since \(F_U\) is unitary-valued, every \(k\)-fold discrete derivative
\[
    \Delta_{h_k}\cdots\Delta_{h_1}F_U(x)
\]
is again unitary.  Therefore, for every \(x,h_1,\ldots,h_k\in A\),
\[
    \ip{\Id}{\Delta_{h_k}\cdots\Delta_{h_1}F_U(x)}
\]
lies in the closed unit disk.  Moreover, if \(V\in\U(N)\), then \(\ip{\Id}{V}=1\) if and only if \(V=\Id\).
Since the average of finitely many points in the closed unit disk is equal to \(1\) only when all points are equal to \(1\), the defining average for $\norm{F_U}_{U^k(A)}^{2^k}$ is equal to \(1\) if and only if
\[
    \Delta_{h_k}\cdots\Delta_{h_1}F_U(x)=\Id
\]
for every \(x,h_1,\ldots,h_k\in A\).  By Leibman's definition,
this is equivalent to \(F_U\) being polynomial of degree at most \(k-1\).
This proves the equivalence of \textup{(i)} and \textup{(ii)}. For \(k\geq2\), applying Theorem~\ref{thm:bgj} with \(k-1\) gives
 the final assertion.
\end{proof}

\end{document}